\theoremstyle{plain}
\newtheorem{thm}{Theorem}[section]
\theoremstyle{definition}
\newtheorem{exa}{Example}[section]
\newtheorem{dfn}{Definition}[section]
\newcommand{\bdm}{\begin{displaymath}}
\newcommand{\edm}{\end{displaymath}}
\newcommand{\be}{\begin{equation}}
\newcommand{\ee}{\end{equation}}
\newcommand{\ba}[1]{\begin{array}{#1}}
\newcommand{\ea}{\end{array}}
\newcommand{\btab}{\begin{tabular}}
\newcommand{\etab}{\end{tabular}}
\newcommand{\ra}{\rightarrow}
\begin{document}
\def\haken{\mathbin{\hbox to 6pt{%
                 \vrule height0.4pt width5pt depth0pt
                 \kern-.4pt
                 \vrule height6pt width0.4pt depth0pt\hss}}}
    \let \hook\intprod
\setcounter{equation}{0}
%
%
\thispagestyle{empty}
%
\date{\today}
\title[isoparametric hypersurface]{Isoparametric hypersurfaces and their applications to Special Geometries}
%
%
%
\author{firouz khezri}
\address{\hspace{-5mm} 
{\normalfont\ttfamily khezri@mathematik.hu-berlin.de}\newline
Institut f\"ur Mathematik \newline
Humboldt-Universit\"at zu Berlin\newline
Unter den Linden 6\newline
Sitz: John-von-Neumann-Haus, Adlershof\newline
D-10099 Berlin, Germany}
%
\thanks{Supported by Berlin Mathematical School} 

\maketitle
\pagestyle{headings}
%
%
%
\section{ Motivation}
Let $\Upsilon$ be an object (e.g tensor) whose isotropy group under the action of $\mathrm SO({n_k})$ is $H_{k}$$\subseteq\mathrm SO({n_k})$.
Consider $H_k$ structures $(N^{n_k},g,\Upsilon)$  with Levi-Civita connection $\overline{\Gamma}\in\mathfrak{so}{(n_k)}\otimes\mathbb{R}^{n_k} $  uniquely decomposable according to: 
\bdm
 \overline{\Gamma}=\Gamma+\frac{1}{2}T, 
\edm
where $\Gamma\in\mathfrak{h}_{k}\otimes\mathbb{R}^{n_k}$ 
and $T\in\Lambda^3(\mathbb{R}^{n_k})$. 
A $\mathfrak{h}_k$-valued connection $\Gamma$ of $H_k$ structure is called characteristic connection.\vspace{6 mm}

We could ask the following questions:

\begin{enumerate}
\item   In which dimensions ${n_k}$ of $N$ does such a decomposition exist?

\item   What is $\Upsilon$ which reduces $\mathrm SO({n_k})$ to $H_k$ for them?

\item   What are the possible isotropy groups $H_k$ of $SO({n_k})$?
\end{enumerate}
\vspace{2mm}

Moreover, if  $T\in\Lambda^3(\mathbb{R}^{n_k})$ was identically zero, then since $\overline{\Gamma}=\Gamma $,
the holonomy group of $(N^{n_k},g,\Upsilon)$ would be reduced to $H_k\subseteq\mathrm SO({n_k})$.
All irreducible compact Riemannian manifold $(N^{n_k},g,\Upsilon)$ with reduced holonomy group are classified by Berger; In this note we will focus on irreducible symmetric spaces $G/H$ from Cartan's list, with the holonomy group $H_k\subseteq \mathrm{SO}({n_k})$.
The simplest irreducible symmetric space in Cartan's list is $N^5=SU(3)/SO(3)$.\\

Pawel Nurowski \cite{Nur} studied $5$-dimensional manifolds $(N,g,\Upsilon)$ and states that such $\Upsilon$ whose isotropy group under the action of $\mathrm{SO}(5)$ is the irreducible  $\mathrm{SO}(3)$ is determined by the following conditions:

\begin{enumerate}
\item $\Upsilon_ {ijk}=\Upsilon _{(ijk)}$  \hspace{5mm}    (totally symmetric)

\item $\Upsilon_ {ijj}=0$ \hspace{13mm}                  (trace-free)

\item $\Upsilon_{ijk}\Upsilon_{lmi} + \Upsilon_{lji}\Upsilon_{kmi} + \Upsilon_{kli}\Upsilon_{jmi}=g_{jk}g_{lm} + g_{lj}g_{km} +g_{kl}g_{jm}$.
\end{enumerate}

The tensor $\Upsilon$ can be defined by means of a homogeneous polynomial of degree 3:
\begin{eqnarray*}
\Upsilon_{ijk}x_{i}x_{j}x_{k} &=& \frac{1}{2} \det \left(\begin{array}{ccc}
x_{5}-\sqrt{3}x_{4} & \sqrt{3}x_{3} & \sqrt{3}x_2 \\
\sqrt{3}x_3 & x_{5}+\sqrt{3}x_{4} &\sqrt{3}x_1\\
\sqrt{3}x_2 & \sqrt{3}x_1 &-2x_5 \\ 
\end{array}\right)\\
&=& x^3_5+\frac{3}{2}x_5(x^2_1+x^2_2)-3x_5(x^2_3+x^2_4)+\frac{3\sqrt{3}}{2}x_4(x^2_1-x^2_2)+3\sqrt{3}x_1x_2x_3.
\end{eqnarray*}
Then he uses the results of Cartan's work on isoparametric hypersurfaces and claims that:

\begin{thm}
The only dimensions $n_k$ in which conditions $(1)-(3)$ admit a solution for $\Upsilon_{ijk}$ are $n_k=3k+2$, where $k=1,2,4,8$.
\end{thm}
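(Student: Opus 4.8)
The plan is to recognize that the algebraic identity~$(3)$, together with $(1)$ and $(2)$, forces $\Upsilon$ to define a \emph{Cartan--Münzner polynomial} of degree $3$ on $\mathbb{R}^{n_k}$. Concretely, set $F(x) = \Upsilon_{ijk}x_ix_jx_k$; I would first check that the full symmetry and trace conditions are equivalent to $\Delta F = 0$ (harmonicity), and that condition~$(3)$ is precisely the statement $|\nabla F|^2 = 9|x|^4$ after appropriate normalization. A homogeneous polynomial $F$ of degree $d$ satisfying $\Delta F = 0$ and $|\nabla F|^2 = d^2 |x|^{2d-2}$ is exactly a Cartan--Münzner polynomial, whose level sets $F^{-1}(c)\cap S^{n_k-1}$ foliate the sphere by isoparametric hypersurfaces with $g = d$ distinct principal curvatures. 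So the classification question becomes: for which $n_k$ does there exist an isoparametric hypersurface in $S^{n_k-1}$ with exactly $g = 3$ principal curvatures?

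Next I would invoke Cartan's theorem on isoparametric hypersurfaces with three distinct principal curvatures (this is exactly the ``results of Cartan's work'' the statement refers to). Cartan proved that in the case $g=3$ all principal curvatures have the same multiplicity $m$, that $m \in \{1,2,4,8\}$, and that the ambient sphere has dimension $3m+1$, i.e. $n_k - 1 = 3m+1$, hence $n_k = 3m + 2$. Writing $k := m$ gives exactly $n_k = 3k+2$ with $k \in \{1,2,4,8\}$. Moreover Cartan showed these are realized: the isoparametric families arise as the orbits of the isotropy representations of the rank-two symmetric spaces $\mathrm{SU}(3)/\mathrm{SO}(3)$, $\mathrm{SU}(3)$, $\mathrm{SU}(6)/\mathrm{Sp}(3)$, $E_6/F_4$ for $m = 1,2,4,8$ respectively, and the defining polynomial is in each case a determinant-type cubic generalizing the $3\times 3$ one displayed above (built from the division algebras $\mathbb{R}, \mathbb{C}, \mathbb{H}, \mathbb{O}$). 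This gives both directions: existence for these four values and non-existence otherwise.

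The remaining point is to make the correspondence between ``$\Upsilon$ satisfying $(1)$--$(3)$'' and ``$F$ a Cartan--Münzner cubic'' a genuine bijection rather than just an implication, so that the classification of the latter transfers verbatim. In one direction, given $\Upsilon$ one sets $F = \Upsilon_{ijk}x_ix_jx_k$ and verifies the two PDEs; in the other, given such an $F$, one recovers $\Upsilon$ as $\tfrac{1}{6}\partial_i\partial_j\partial_k F$ and checks $(1)$--$(3)$. Condition~$(3)$ is visibly the polarization of $|\nabla F|^2 = 9|x|^4$: differentiating $\sum_i (\partial_i F)^2$ and matching the totally symmetrized rank-four tensor on both sides yields exactly the stated contraction identity, while $(2)$ matches $\Delta F = 0$. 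I expect this translation step to be the only place requiring genuine care: one must track the combinatorial constants in the polarization identities and confirm that the normalization in $(3)$ (with the right-hand side $g_{jk}g_{lm}+g_{lj}g_{km}+g_{kl}g_{jm}$, three terms) corresponds to the Münzner normalization with $d = 3$. Once that is pinned down, the theorem is an immediate consequence of Cartan's classification, with no further analysis of the hypersurface geometry needed.
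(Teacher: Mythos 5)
Your proposal follows essentially the same route as the paper: identify $F=\Upsilon_{ijk}x_ix_jx_k$ with a degree-three Cartan--M\"{u}nzner polynomial (trace-freeness giving $\Delta F=0$, condition $(3)$ being the polarized form of $|\mathrm{grad}\,F|^2=9r^4$) and then invoke the classification of isoparametric hypersurfaces of $S^{n_k-1}$ with three distinct principal curvatures, which forces equal multiplicities $m\in\{1,2,4,8\}$ and $n_k=3m+2$, with existence furnished by Cartan's cubics over $\mathbb{R},\mathbb{C},\mathbb{H},\mathbb{O}$. The only difference is one of emphasis: where you quote Cartan's theorem as a black box, the paper sketches why $m\in\{1,2,4,8\}$ (the division-algebra structure on a curvature distribution, following Karcher as communicated by Ferus, or alternatively Console--Olmos) and realizes the four families as isotropy orbits of the rank-two symmetric spaces in the Hsiang--Lawson/Takagi--Takahashi table.
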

\begin{thm}
In dimensions $n_k=5,7,13$ and $26$ tensor $\Upsilon$ reduces the $\mathbf GL(n_k,\mathbb R)$ via $\mathrm O(n_k)$ to a subgroup $\mathrm H_k$, where:
\begin{itemize}
\item  For $n_1=5$ the group $H_1$ is the irreducible $\mathrm SO(3)$ in $\mathrm SO(5)$; The torsionless compact model is $SU(3)/SO(3)$
\item  For $n_2=8$ the group $H_2$ is the irreducible $\mathrm SU(3)$ in $\mathrm SO(8)$; The torsionless compact model is $SU(3)/SU(3)/SU(3)$
\item  For $n_4=14$ the group $H_{4}$ is the irreducible $\mathrm Sp(3)$ in $\mathrm SO(14)$; The torsionless compact model is  $SU(6)/Sp(3) $
\item  For $n_8=26$ the group $H_{8}$ is the irreducible $\mathrm F_4(3)$ in $\mathrm SO(26)$; The torsionless compact model is   $E_6/F_4$. 
\end{itemize}
\end{thm}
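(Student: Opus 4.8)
The plan is to identify the tensor $\Upsilon$ satisfying conditions $(1)$–$(3)$ with the structure tensor of the Jordan algebra of $3\times 3$ Hermitian matrices over a real normed division algebra $\mathbb{K}$, and then read off the stabilizer from the classical theory of these algebras. Recall that for $\mathbb{K}\in\{\R,\C,\Haken,\mathbb{O}\}$ of real dimension $k=1,2,4,8$, the space $H_3(\mathbb{K})$ of Hermitian $3\times 3$ matrices has real dimension $3k+3$; the trace-free part $H_3^0(\mathbb{K})$ has dimension $3k+2=n_k$, matching Theorem 1.1. On $H_3^0(\mathbb{K})$ the cubic form $X\mapsto \det X$ (suitably normalized, with the Euclidean metric $g(X,Y)=\tr(XY)$) is exactly the polynomial displayed in the Motivation section when $k=1$. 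I would first verify that the coefficients $\Upsilon_{ijk}$ defined by this determinant satisfy $(1)$ trivially, $(2)$ because the trace-free slice is used, and $(3)$ as the polarized form of the classical ``sharp'' identity $X^\sharp{}^\sharp=(\det X)\,X$ for cubic Jordan algebras restricted to the trace-free part. This last point is the technical heart: condition $(3)$ is precisely the statement that the quadratic adjoint map $X\mapsto X^\sharp$ composed with itself returns a multiple of $X$, which is the defining identity of a cubic form admitting adjoint, and a short computation reduces it to the Cayley–Hamilton identity in $H_3(\mathbb{K})$.

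Next I would determine the isotropy group. The group preserving the Euclidean norm on $H_3^0(\mathbb{K})$ is $\SO(n_k)$ by definition; the subgroup additionally preserving the cubic form $\det$ is, by the classical results of Chevalley–Schafer, Freudenthal, and Springer on the automorphism and norm-similarity groups of Jordan algebras, exactly the reduced structure group, whose compact real form acts on $H_3^0(\mathbb{K})$ as the irreducible representation of $\SO(3)$, $\SU(3)$, $\Sympl(3)$, $F_4$ in dimensions $5,8,14,26$ respectively. Concretely, the groups preserving both $g$ and the cubic are the subgroups $\SO(3)\subset\SO(5)$ (the $5$-dimensional irrep, i.e. symmetric traceless matrices), $\SU(3)\subset\SO(8)$ acting on traceless Hermitian complex matrices, $\Sympl(3)\subset\SO(14)$, and $F_4\subset\SO(26)$. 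For the torsionless (holonomy) models, I would invoke the discussion already made in the Motivation: when $T\equiv 0$ the holonomy reduces to $H_k$, and the compact irreducible symmetric spaces from Cartan's list carrying exactly these isotropy representations on their tangent spaces are $\SU(3)/\SO(3)$, $(\SU(3)\times\SU(3))/\SU(3)$, $\SU(6)/\Sympl(3)$, and $E_6/F_4$; one checks the isotropy representation of each against the list of irreducible representations above, which is a finite bookkeeping exercise using the known ranks and dimensions.

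The main obstacle I anticipate is not any single computation but the \emph{uniqueness} half: showing that conditions $(1)$–$(3)$ force $\Upsilon$, up to the $\Orth(n_k)$-action, to be the Jordan determinant — equivalently, that there is a single orbit of such tensors. Theorem 1.1 already pins down the admissible dimensions via Cartan's isoparametric classification, so the remaining task is rigidity within a fixed dimension. I would handle this by the same mechanism Cartan used: a tensor satisfying $(1)$–$(3)$ defines, on the unit sphere $S^{n_k-1}\subset\R^{n_k}$, an isoparametric function $x\mapsto \Upsilon_{ijk}x_ix_jx_k$ of degree $3$ with exactly three distinct principal curvatures, and Cartan's theorem says such a hypersurface is unique up to isometry in each of the four dimensions and is precisely the one cut out by the Jordan cubic. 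Thus uniqueness of $\Upsilon$ follows from Cartan's uniqueness for the associated isoparametric family, and the identification of $H_k$ with the listed groups follows by computing the subgroup of $\Orth(n_k)$ preserving that fixed hypersurface, which is the isotropy group of the corresponding rank-one symmetric space — the final step being to match these against Cartan's list to name the torsionless models.
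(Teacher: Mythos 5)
Your proposal is essentially correct, but it reaches the groups $H_k$ by a different mechanism than the paper. The paper's route stays entirely inside homogeneous isoparametric geometry: conditions $(1)$--$(3)$ say precisely that $F=\Upsilon_{ijk}x_ix_jx_k$ is a harmonic Cartan--M\"unzner polynomial with $p=3$ and equal multiplicities; Cartan's classification of the $p=3$ case (recalled in Section~2, together with the division-algebra argument forcing $m\in\{1,2,4,8\}$) shows the level hypersurfaces are tubes over the Veronese embeddings of the projective planes and are homogeneous; and then Theorem~3.1 (Takagi--Takahashi) plus the Hsiang--Lawson classification of cohomogeneity-two actions identifies these families with the principal isotropy orbits of the four rank-two symmetric spaces in the first four rows of the table, so the stabilizer $H_k$ and the torsionless model $G/H$ are read off simultaneously. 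You instead build $\Upsilon$ explicitly as the Jordan determinant on trace-free Hermitian $3\times 3$ matrices over $\mathbb{K}$ and get $H_k$ from the classical Chevalley--Schafer/Freudenthal/Springer description of the automorphism and reduced structure groups of the cubic Jordan algebra, invoking Cartan's isoparametric rigidity only for uniqueness of $\Upsilon$ up to $\Orth(n_k)$, and matching the torsionless models by their isotropy representations. Your route buys an explicit algebraic model of $\Upsilon$ and makes the identification of $T_o(G/H)$ with the trace-free Jordan slice transparent, at the cost of importing the Jordan-algebra stabilizer theorems; the paper's route avoids those and obtains dimensions, groups and symmetric spaces in one stroke from the orbit classification, but is less explicit about $\Upsilon$ itself. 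Two points to tighten: condition $(3)$ is most directly verified as the polarization of the Cartan--M\"unzner equation $|\mathrm{grad}\,F|^2=9r^4$ on the trace-free slice (your Cayley--Hamilton computation does exactly this, whereas the adjoint identity $(X^\sharp)^\sharp=(\det X)X$ is not literally the statement needed), and in your final step the group preserving the $p=3$ family is the isometry group of the projective plane, i.e.\ the isotropy group of the \emph{rank-two} symmetric space $G/H$, not the isotropy group of the rank-one space; as in Nurowski's statement, the identification of $H_k$ is understood up to discrete factors (e.g.\ complex conjugation when $\mathbb{K}=\C$).
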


\vspace{2mm}
Pawel Nurowski does not give complete proof for his claim, in this note we show  how this claim can be reduced to a classification result on principle orbits of rank $2$ symmetric spaces by Hsiang and Lawson\cite{HL}; For this purpose we will give a brief introduction to the theory of isoparametric hypersurfaces in section 2, then we will see the argument of the claimed statement in section 3. Finally we end  this note with inhomogeneous isoparametric hypersurfaces whose construction is nicely related to  Clifford algebras, and we will collect open problems in isoparametric hypersurfaces at the end of this note. 

\vspace{10mm}
I would like to thank Prof. Ilka Agricola for having given me this nice topic to investigate. She was always open to my arising questions and guiding me patiently.

\vspace{2mm}
 My cordial thanks go to Prof. Dirk Ferus who helped me so much to understand the concept of isoparametric hypersurface. Specially I am very grateful for his generous help to provide me  nice articles on isoparametric hypersurface, and his helpful lecture notes on the topic.

\vspace{2mm}
Last but not least, I would like to thank the Berlin Mathematical School and the One-Stop-Office for the financial and 
un-financial support by providing the atmosphere conducing to fruitful research.

\section{ Basic  Properties of isoparametric hypersurfaces}
%

\begin{dfn}
A family of isoparametric  hypersurface of a Riemannian manifold $\tilde{M}^{n}$ is a collection of  level sets of a non-constant, real valued and smooth function $f$ defined on an open and connected subset of $\tilde{M}^{n}$ such that the gradient and Laplacian of $f$ satisfy

\begin{equation}
\label{iso}
|grad(f)|^2=F_1(f)  ,\hspace{5mm}  \Delta(f)=F_2(f),
\end{equation}
where  $F_1,F_2$ are smooth functions $:\mathbb{R}\ra\mathbb{R}$.
Such $f$ with this properties is called  isoparametric function.
\end{dfn}
Hence $|grad(f)|^2$ and $\Delta (f)$ are functions of $f$ and these quantities are constant on each level set of $f$. Constancy of the gradient along the level set of a function $f$ implies that the families $\{f^{-1}(c)\}$ are parallel. We show that constancy of the Laplacian together with constancy of the gradient implies that the mean curvature of the hypersurface is constant, for this purpose assume that  $f$ is as above and $M^{n-1}_c=\{x\in\tilde{ M}^{n} | f(x)=c\}$ is a nonsingular $(grad(f)\neq 0)$ hypersurface. Under this assumption
\begin{equation}
\xi=\frac{grad(f)}{|grad(f)|},
\end{equation}
is a unit normal vector field to the hypersurface $M^{n-1}_c$. The second fundamental form $h$ for $M^{n-1}_c$ is given by
\begin{equation}
h(X,Y)=\frac{-H_f(X,Y)}{|grad(f)|}, 
\end{equation}
where $H_f$ denotes the Hessian of the function $f$ on $\tilde{M}^n$. The mean curvature (=trace of $h$) 
is given by 

\begin{equation}
\label{mean}
\frac{(grad(f))(|grad(f)|)- (|grad(f)|)\Delta_f}{|grad(f)|},
\end{equation}
but (\ref{iso}) implies that  (\ref{mean}) is constant, that is, each  hypersurface has constant mean curvature. If we assume that the ambient space $\tilde M^n$ is a manifold with constant sectional curvature then  hypersurface $M^{n-1}$  has constant mean curvature if and only if its all principal curvatures are constant\cite{Nom2}. Actually this fact is a starting point in order to do our  main job, so let to state this as a new definition.
\begin{dfn}
Let $M^{n-1}$ be an  immersed hypersurface in a simply connected, complete Riemannian manifold $\tilde{M}^n$  with constant sectional curvature, in particular let $\tilde {M}^n$ be  $\mathbb{R}^n$, $\mathrm{S}^n$ or $\mathrm{H}^n$, then we call $M^{n-1}$  an isoparametric hypersurface if all its  principal curvatures are constant.
\end{dfn}

Notice that in general it is not true that isoparametric hypersurfaces in any Riemannian manifold have constant principal curvatures. Examples have been found in $P^n(\mathbb{C})$ by Wang \cite{Wang1}.

Now we can also give a more intuitive geometric definition of isoparametric hypersurface.

\begin{dfn}
Let $\tilde M^n$ be a real space-form namely $\mathbb{R}^n$, $\mathrm{H}^n$ and $\mathrm{S}^n$ then an isoparametric family of hypersurfaces  is a family of parallel hypersurfaces $g_t:M^{n-1} \rightarrow \tilde{M}^n$ obtained from a hypersurface $g:M^{n-1} \rightarrow \tilde{M}^n$ with constant principal curvatures
\end{dfn}

For $\tilde{M}^n = {\mathbb {R}}^n$, the formula for $g_t$ is
\begin{gather}
\label{parallel-E}
g_t(x) = g(x) + t \xi(x),
\end{gather}
and for $\tilde{M}^n = S^n$, the formula for $g_t$ is{\samepage
\begin{gather}
\label{parallel-S}
g_t(x) = \cos t \; g(x) + \sin t \; \xi(x).
\end{gather}
There is a similar formula in hyperbolic space (see, for example, \cite{CecToh}).}

In 1938 Segre \cite{Seg} showed that an isoparametric hypersurface in $\mathbb{R}^n$ can have at most two distinct principal curvatures, and it must be an open subset of a hyperplane, hypersphere or spherical cylinder $S^k\times\mathbb{R}^{n-k}$.
A similar result holds for $H^n$. Thus isoparametric hypersurfaces in this two cases  are completely classified.
However conditions in $S^n$  do not lead to such  strong restrictions, and Cartan found far more examples of isoparametric hypersurfaces in $S^n$ that are all geometrically interesting. Nevertheless till now there is no  complete classification of isoparametric hypersurfaces of $S^n$. From now on we 
will only focus on isoparametric hypersurfaces on $S^n$ and all discussion will be on spherical case. But before starting to give a classification we need some preparation. 

Locally, for suf\/f\/iciently small values of $t$, the map $g_t$ (\ref{parallel-S}) is also an immersed hypersurface.  However, the map
$g_t$ may develop singularities at the focal points of the original hypersurface $g(M^{n-1})$. More precisely: 

\begin{dfn}
Let $M^{n-1}$ be an immersed hypersurface in $S^n$ and let $\xi$ be a unit normal vector field. A point $y_0\in S^n$ is called focal point of $(M^{n-1},x_0)$ where $x_0\in M^{n-1}$, if
\begin{equation}
 y_0=\cos t_0\;x_0+\sin t_0\;x_0,
\end{equation} 
for some $t_0$ and if the differential of the mapping
\begin{equation}
 (x,t)\rightarrow \cos t\;x+\sin t\;x,
\end{equation}
is singular at $(x_0,t_0)$. A point $y_0$ is called a focal point of $M^{n-1}$ if it is focal point of $(M^{n-1},x_0)$ for some $x_0\in M^{n-1}$.
\end{dfn}

We see that
\begin{equation}
y_0=\cos t_0\;x_0+\sin t_0\;x_0,
\end{equation} 
is a focal point of $(M^{n-1},x_0)$ if and only if $\cot t_0$
is one of the principal curvatures.

Now assume that $M^{n-1}$ has $p$ distinct constant principal curvatures $a_1=\cot\theta_1 ,...,a_p=\cot\theta_p$ of multiplicities $m_1$,...,$m_p$. For each $i$, $1\leq i\leq p$, let 
\begin{equation}
T_i(x)=\{X\in T_xM^{n-1}| AX=a_iX; \hspace{2mm} x\in M^{n-1}\}, 
\end{equation}
where $A$ is the shape operator. The distribution $T_i:x\rightarrow T_i(x)$ of dimension $m_i$ is integrable and any maximal integral manifold $M_i(x)$ of $T_i(x)$ through $x$ is totally geodesic in $M^{n-1}$ and umbilical as a submanifold of $S^n$. Moreover if $M^n$ is connected and compact, then $M_i(x)$ is a $m_i$-dimensional small sphere of $S^n$.

We define a differential mapping $g_i:M^{n-1}\rightarrow S^n $ by
\begin{equation}
g_i(x)=\cos\theta_ix+\sin\theta_i\xi x ,
\end{equation}
where $\theta_i$=arcot$a_i$. The differential of $g_i$ at $x$ has $T_i(x)$ as the null space and injective on the subspace $\sum_{k\neq i}T_k(x)$ of $T_xM$. It follows that for each point $x$ there is a local coordinate system $u^1,...,u^n$ with origin $x$ such that $g_i$ is a one-to-one immersion of the slice $u^1=...=u^{m_i}$ into $S^n$, the image being contained in $g_i(M^{n-1})$. In this way $g_i(M^{n-1})$ is a submanifold of dimension $(n-m_i-1)$ in a neighborhood of $g_i(x)$. Geometrically, $g_i$ maps $M_i(x)$ into one single point $g_i(x)$ and $g_i(M^{n-1})$ collapses by dimension $m_i$.
So $g_i(M^{n-1})$ is  an $(n-m_i-1)$-dimensional submanifold which is called focal submanifold of $M^{n-1}$.

Let $M^{n-1}_t$ be a family of the parallel family arising from the $M^{n-1}=M^{n-1}_0$. We want  to see how the principal curvatures of the original hypersurface $M^{n-1}_0$ are related to the principal curvatures of the immersed hypersurfaces $M^{n-1}_t$. If $a_i=\cot\theta_i$ is one of the principal curvatures of $M^{n-1}$ at $x\in M^{n-1}$, then the corresponding principal curvature of   $M^{n-1}_t$ at the point $x_t$ is $\cot(\theta-t)$. The geodesic distance from $x_0$ to $g_i(x_0)$ is $\theta_i$ and the geodesic distance from $x_t$ to $g_i(x_0)$ is $\theta-t$. This means that the focal submanifold of $M^{n-1}_t$ corresponding to the principal curvature $\cot(\theta-t)$ coincides with focal submanifold of $M^{n-1}$ corresponding to $\cot\theta_i$. Hence all isoparametric hypersurfaces in the family have the same focal submanifold. Moreover we will see later that each family of isoparametric hypersurface has two focal submanifolds.

 M\"{u}nzner published two preprints that
greatly extended Cartan's work and have served as the basis for much of the research in the f\/ield since that
time.  The preprints were eventually published as papers \cite{Mu,Mu2} in 1980--1981.

In the f\/irst paper \cite{Mu} ,
M\"{u}nzner began with a geometric study of the focal submanifolds of an isoparametric
hypersurface $g:M^{n-1} \rightarrow S^n$ with $p$ distinct principal curvatures.  Using the fact that each focal
submanifold of $g$ is obtained as a parallel map $g_t$, where $\cot t$ is a principal curvature of $g(M^{n-1})$,
M\"{u}nzner computed a formula for the shape operator of the focal submanifold $g_t(M^{n-1})$ in terms of the
shape operator of $g(M^{n-1})$ itself. Then by using a symmetry argument, he proved that if the principal curvatures
of $g(M^{n-1})$ are written as $\cot \theta_k, 0 < \theta_1 < \cdots < \theta_p < \pi$, with multiplicities $m_k$, then
\begin{gather}
\label{prin-curv}
\theta_k = \theta_1 + \frac{(k-1)}{p} \pi, \qquad 1 \leq k \leq p,
\end{gather}
and the multiplicities satisfy $m_k = m_{k+2}$ (subscripts mod $p$).  Thus, if $p$ is odd, all
of the multiplicities must be equal, and if $p$ is even, there are at most two distinct multiplicities.

If $\cot t$ is not a principal curvature of $g$, then the map $g_t$ in equation (\ref{parallel-S}) is also an
isoparametric hypersurface with $p$ distinct principal curvatures $\cot (\theta_1 - t),\ldots,\cot (\theta_p -t)$.
If $t = \theta_k$ (mod~$\pi$),
then the map $g_t$ is constant along each
leaf of the $m_k$-dimensional principal foliation~$T_k$, and the image of $g_t$ is a smooth focal submanifold of $g$ of
codimension $m_k +1$ in $S^n$. All of the hypersurfaces $g_t$ in a family of parallel isoparametric hypersurfaces
have the same focal submanifolds.

In a crucial step in the theory,
M\"{u}nzner then showed that $g(M^{n-1})$ and its parallel hypersurfaces and focal submanifolds are each contained in
a level set of a homogeneous polynomial~$F$ of degree $p$ satisfying the following
{\em Cartan--M\"{u}nzner differential equations}
involving the Euclidean dif\/ferential operators ${\rm grad}\, F$ and Laplacian $\triangle F$ on ${\mathbb {R}}^{n+1}$,
\begin{gather}
\label{eq:C-M}
|{\rm grad}\,F|^2  =   p^2 r^{2p-2}, \qquad r = |x|, \\
 \triangle F  =  c r^{p-2}, \qquad c = p^2 (m_2-m_1)/2, \nonumber
\end{gather}
where $m_1$, $m_2$ are the two (possibly equal) multiplicities of the principal curvatures on $f(M^{n-1})$.

Conversely, the level sets of the restriction $F|_{S^n}$ of a function $F$ satisfying equations (\ref{eq:C-M}) constitute
an isoparametric family of
hypersurfaces and their focal submanifolds, and $F$ is called the
{\em Cartan--M\"{u}nzner polynomial} associated to this family.
Furthermore, M\"{u}nzner showed that the level sets of $F$ are connected, and thus any
connected isoparametric hypersurface in $S^n$ lies in a unique compact, connected isoparametric hypersurface
obtained by taking the whole level set.

The values of the restriction $F|_{S^n}$
range between $-1$ and $+1$.  For $-1 < t < 1$, the level set $M_t = (F|_{S^n})^{-1}(t)$ is an isoparametric hypersurface,
while $M_{+} = (F|_{S^n})^{-1}(1)$ and $M_{-} = (F|_{S^n})^{-1}(-1)$ are focal submanifolds.  Thus, there are exactly
two focal submanifolds for the isoparametric family, regardless of the number $p$ of distinct principal curvatures.
Each principal curvature $\cot \theta_k$, $1 \leq k \leq p$, gives rise to two antipodal focal points corresponding
to the values $t = \theta_k$ and $t = \theta_k + \pi$ in equation (\ref{prin-curv}).  

Now we can start our classification, let $p$ be a number of distinct principal curvatures of isoparametric hypersurface in $S^n$ then Cartan himself could comletely classified all isoparametric heypersurfaces with $p\leq 3$ and gave examples with $p=4$. In the next coming paragraph we explain his work.

In the case $p=1$,  $M^{n-1}$ is an open subset of a great or small hypersphere in $S^n$.
 
If $p=2$, then $M^{n-1}$ must be a standard product of two spheres,
\begin{gather}
\label{product}
S^a (r) \times S^b (s) \subset S^n (1) \subset {\mathbb {R}}^{a+1}
\times {\mathbb {R}}^{b+1} = {\mathbb {R}}^{n+1}, \qquad r^2 + s^2 = 1,
\end{gather}
where $n = a+b+1$.

The case $p=3$ is one of the interesting case, here we see why we have such a restricted dimensions   in  Theorem 1.1. 
Cartan \cite{Car3} showed that this case can occur  only in a certain dimensions of sphere, in particular in dimensions $4,7,13$ and $25$ of sphere. Isoparametric hypersurfaces with $p=3$ must be a tube of
constant radius over a standard embedding of a projective
plane ${\bf P}^2(\mathbb F)$ into $S^{n}$ (see, for example, \cite[pp.~296--299]{CR}),
where ${\mathbb F}$ is the division algebra
${\mathbb {R}}$, ${\mathbb C}$, ${\mathbb H}$ (quaternions),
${\mathbb O}$ (Cayley numbers). Then he obtained isoparametric function whose level sets define isoparametric hypersurface family. Cartan showed that any isoparametric family with $p$
distinct principal curvatures of the same multiplicity can be def\/ined by an equation of the form
$F = \cos pt$ (restricted to $S^n$), where $F$ is a harmonic homogeneous polynomial of degree $p$ on
${\mathbb {R}}^{n+1}$ satisfying
\begin{gather}
\label{car-eq}
|{\rm grad}\,F|^2 = p^2 r^{2p-2},
\end{gather}
where $r = |x|$ for $x \in {\mathbb {R}}^{n+1}$, and ${\rm grad}\, F$ is the gradient of $F$ in ${\mathbb {R}}^{n+1}$. This was
a forerunner of M\"{u}nzner's general result (\ref{eq:C-M}) that every isoparametric hypersurface is algebraic, and its
def\/ining polynomial satisf\/ies certain dif\/ferential equations which generalize those that Cartan found in this special case. When $p=3$, then Cartan obtained homogeneous polynomial $F$ of degree 3
\begin{equation}
 F(X)=u^3-3uv^2
+\frac{3}{2}u^2(x\overline{x}+y\overline{y}-2z\overline{z})+\frac{3\sqrt{3}}{2}v(x\overline{x}-y\overline{y})
+\frac{3\sqrt{3}}{2}(xyz+\overline{x}\overline{y}\overline {z}),
\end{equation}
where for,
\begin{itemize}
\item $n=4$  $\hspace{2.7mm}$   $x,y,z\in\mathbb{R}$
\item $n=7$  $\hspace{2.7mm}$   $x,y,z\in\mathbb{C}$
\item $n=13$ $\hspace{1mm}$     $x,y,z\in\mathbb{H}$
\item $n=25$ $\hspace{1mm}$   $x,y,z\in\mathbb{O}$.
\end{itemize}

Unfortunately we could not understand (for language and notation problem of old text) how Cartan argued for dimension problem in this case. However Prof. Dirk Ferus \cite{DF1} gave us one alternative argument based on an unpublished paper of Prof. H. Karcher which we are going to explain it in next paragraph. We also refer to one another argument in \cite{CO} by Console and Olmos, they related the shape operator of the hypersurface to the Clifford system then by using representation theory of Clifford algebra they showed that isoparametric hypersurface of $S^n$  with three distinct principal curvatures can only occur in dimension $4,7,13$ and $25$.

The goal is to construct a certain division algebra by using the properties of isoparametric hypersurfaces with $3$ distinct principal curvatures then our argument follows from dimension of this division algebra.  Consider a manifold $M$ with covariant derivative $\nabla$. The latter extends naturally to the tensor algebra of $M$. In particular, for a (1,1)-tensor field $A$ we have
\bdm
(\nabla_XA).Y:=\nabla_X(AY)-A(\nabla_XY),
\edm

Now consider an isoparametric hypersurface $g:M^{n-1}\rightarrow S^n$ with shape operator $A$ and principal curvatures $\lambda_i$. 
Put $E_i:=$ker$(A-\lambda_iId)$, and take vector fields $X_i,Y_i,Z_i\in\Gamma(E_i)$. Then 
\bdm
(\nabla_{X_i}A).Y_j=\nabla_{X_i}(\lambda_jY_j)-A\nabla_{X_i}Y_j=(\lambda_j-A)\nabla_{X_i}Y_j\in E^{\perp}_j.
\edm
Using Codazzi's equation $(\nabla_{X}A).Y=(\nabla_YA).X$ we obtain moreover 
\begin{equation}\label{2}
(\nabla_{X_i}A).Y_j=(\lambda_i-A)\nabla_{Y_j}X_i,
\end{equation}
and therefore
\begin{equation}\label{3}
(\nabla_{X_i}A).Y_j\in\sum_{k\neq {i,j}}E_k.
\end{equation}
For $i\neq j$ the self-adjointness of $\nabla_{X_i}A$ yields
\begin{equation}\label{4}
\langle(\nabla_{X_i}A).Y_i,Z_j\rangle=\langle{Y_i},(\nabla_{X_i}A).Z_j\rangle=0,
\end{equation}
since $(\nabla_{X_i}A).Z_j\in E_i^{\perp}$. Hence, by ($\ref{3}$) and ($\ref{4}$),
\begin{equation}\label{5}
(\nabla_{X_i}A).Y_i=0,
\end{equation}
or
\bdm
A(\nabla_{X_i}Y_i)=\nabla_{X_i}(AY_i)=\lambda\nabla_{X_i}Y_i.
\edm
This shows that $E_i$ are integrable, and its integral manifolds the $\lambda_i-$leaves, are totally geodesic in $M$. If the ambient space is a sphere, the leaves are small spheres, and the idea is the reconstruction of the immersion $g$ by moving the leaves along each other.

We now restrict to the case  $p=3$, that is, M is hypersurface in $S^n$ with three distinct principal curvatures $\lambda_1,\lambda_2$ and $\lambda_3$ with multiplicities $m_1,m_2$ and $m_3$ respectively. Since the number of distinct principal curvatures is odd, all multiplicities must be equal, so let $m:=m_1=m_2=m_3$ then $n=3m+1$. Hence the isoparametric hypersurface is immersed in an $(3m+1)$-dimensional sphere, and it suffices to show that $m$ can be only $1,2,4,8$. If we choose isomorphisms $\rho:E_2\rightarrow E_1$
and $\sigma:E_3\rightarrow E_2$ then
\bdm
YZ:=\sigma\left((\nabla_{\rho(Y)}A).Z\right),
\edm
for $Y,Z\in E_2$ defines a division algebra on $E_2$ . Note that $(\nabla_{\rho(Y)}A).Z\in E_3$ for $Y,Z\in E_2$ by $(\ref{3})$.
Hence  $m=dim E_2\in\{1,2,4,8\}$ and from this we obtain that dimensions of sphere having isoparametric hypersurfaces with three distinct principal curvatures can be only $4,7,13$ and $25$.  From the classification of those algebra, we conversely get information on $\nabla A$, i.e on the initial condition for the equation of motion of $A$ along the $\lambda_i$-leaves. This allows the construction  of $g$. Thus, in case $p=3$ not only the multiplicities are known, but the isoparametric hypersurfaces themselves.

In the case $p=4$, Cartan produced isoparametric hypersurfaces with four principal curvatures of
multiplicity one in $S^5$ and four principal curvatures of multiplicity two in $S^9$.  He noted all of his examples
are homogeneous, each being an orbit of a point under an appropriate closed subgroup of $SO(n+1)$.  Based on his results and the properties of his examples, Cartan asked the following three questions \cite{Car3}, all of which were answered
in the 1970's.

\begin{enumerate}\itemsep=0pt
\item  For each positive integer $p$, does there exist an isoparametric family with $p$ distinct principal
curvatures of the same multiplicity?

\item \looseness=-1 Does there exist an isoparametric family of hypersurfaces with more than three distinct~prin\-cipal curvatures such
that the principal curvatures do not all have the same multiplici\-ty?

\item\looseness=1 Does every isoparametric family of hypersurfaces admit a transitive group of isomet\-ries?
\end{enumerate}
These three questions will lead our note from now on, and we will reach to our main goal through answering to these questions, specially second question  is more interesting and solution to this one will guide us to find answer for our main goal.
 
M\"{u}nzner showed that each isoparametric hypersurface $M_t$ in the family separates
the sphere $S^n$
into two connected components $D_1$ and $D_2$, such that $D_1$ is a~disk bundle with f\/ibers of dimension
$m_1 + 1$ over $M_{+}$,
and $D_2$ is a disk bundle with f\/ibers of dimension $m_2 + 1$ over $M_{-}$, where $m_1$ and $m_2$ are the multiplicities
of the principal curvatures that give rise to the focal submanifolds $M_{+}$ and $M_{-}$, respectively.

This topological situation has been the basis
for many important results in this f\/ield concerning the number $p$ of distinct principal curvatures and the multiplicities
$m_1$ and $m_2$. In particular, in his second paper, M\"{u}nzner \cite{Mu2} assumed that $M$ is a compact,
connected embedded hypersurface that separates $S^n$ into two disk bundles
$D_1$ and $D_2$ over compact manifolds with f\/ibers of dimensions $m_1 + 1$ and $m_2 + 1$, respectively.  From this
hypothesis, M\"{u}nzner proved that the dimension of the cohomological ring $H^*(M,{\bf Z}_2)$ must be $2\alpha$, where
$\alpha$ is one of the numbers  1, 2, 3, 4 or 6.  He then proved that if $M$ is a compact, connected isoparametric hypersurface
with $p$ distinct principal curvatures, then $\dim H^*(M,{\bf Z}_2) = 2p$.
Combining these two results, M\"{u}nzner obtained his major
theorem that the number $p$ of distinct principal curvatures of an isoparametric hypersurface in a sphere $S^n$
must be  1, 2, 3, 4 or 6.

Hence this provides a negative answer for Cartan's first question.

In the next two coming sections we will discuss and provide the answer for the second and third Cartan's questions, respectively.

\section{Homogeneous isoparametric hypersurfaces}
\begin{dfn}
We say that isoparametric hypersurface is homogeneous if its isometry group acts on it transitively.
\end{dfn}
 
If $M^{n-1}\subset S^n=\mathrm{SO}(n+1)/\mathrm{SO}(n)\subset(\mathbb{R}^{n+1})$  is an orbit of a subgroup G of  $\mathrm{SO}(n+1)$, then $M^{n-1}$ is homogeneous and has constant principal curvature, so any orbits of such a group forms an isoparametric family in $S^n$.

A classification of homogenous isoparametric hypersurface families follow from a classification of all subgroups $G$ of $\mathrm{SO}(n+1)$ such that the principle orbits of $G$ in $S^n$ have codimension $1$ considered as a submanifold of $S^n$ or, what is the same thing, have codimension two as a submanifold of $\mathbb{R}^{n+1}$. We say that such an action of $G$ on $\mathbb{R}^{n+1}$ has cohomogeneity two. One can simplify the problem by asking for only those groups $G$ that are maximal in the sense, that there is no subgroup $H$ of $\mathrm{SO}(n+1)$ strictly containing $G$ and having the same orbit as $G$.
There is a list of all maximal isometric action with cohomogeneity two of a compact group G on Euclidean space $\mathbb{R}^{n+1}$ by Hsiang and Lawson, it turns out that this list consist exactly of the isotropy representations (action of Ad ) of rank two symmetric spaces.
Hence homogeneous isoparametric hypersurfaces in a sphere are  principle orbits of the isotropy representation of a rank two symmetric spaces. 
We explain it in more explicit way here:

Let $G/H=N^{n+1}$ be a symmetric space of compact type and consider its Cartan's decomposition to orthogonal symmetric Lie algebra:
\bdm
\mathfrak{g}=\mathfrak{h}\oplus\mathfrak{p},
\edm
where $\mathfrak{h}$ is Lie algebra of H and $\mathfrak{p}$ is a subspace of  $\mathfrak{g}$ which can be identified by
\bdm
T_xN=\mathfrak{p},
\edm
so $\dim\mathbf{N}=\dim\mathfrak{p}$.

Define unit sphere $S^n$ in $\mathfrak{p}$ with respect to the positive definite inner product in $\mathfrak{p}$ induced from the Killing form of $\mathfrak{g}$:
\[ S^n=\{u\in\mathfrak{p}|\hspace{2mm} \langle u,u\rangle=1\}. \]
Suppose $\mathfrak{a}=\mathfrak{g_0}$ is a maximal abelian subspace of $\mathfrak{p}$,
then we have a decomposition of $\mathfrak{g}$:
\bdm
\mathfrak{g}=\mathfrak{g_0}+\sum_{\alpha\in\Lambda}\mathfrak{g_\alpha}.
\edm
Let $\Delta$ be the set of all positive roots of $\mathfrak{g}$ with respect to $\mathfrak{a}$, and put:
\bdm
\Delta^{\ast}=\{\lambda\in\Delta|\hspace{2mm}  \frac{\lambda}{2}\notin\Delta\},
\edm
then Takagi and Takahashi  \cite {TT} proved that:
\begin{thm}\begin{enumerate}
\item Every orbit M in $\mathfrak{p}$  under K meets $\mathfrak{a}$.
\item An orbit M  has the highest dimension iff M contains no singular elements of $\mathfrak{a}$ and the highest dimension is exactly dim$\mathfrak {p}$-dim$\mathfrak {a}$.
(An element $X\in\mathfrak{a}$ is called a singular iff $\exists Y\in\mathfrak{g}\backslash\mathfrak{a}$  with [X,Y]=0, i.e $\exists\alpha\in\Lambda$ with $\alpha(X)=0$)
\item If the rank of $(G,H)$ is 2 then an orbit M of a unit vector in $S^n$ under K of 
 highest dimension is a hypersurface of $S^n$ and the principal curvature of M in $S^n$ 
are given by $-\frac{\lambda(B)}{\lambda(A)}$, where $\lambda\in\Delta^{\ast}$
and the pair ${A,B}$ is an orthonormal basis of $\mathfrak{a}$ such that $A\in M\bigcap\mathfrak{a}$.
The multiplicity of  $-\frac{\lambda(B)}{\lambda(A)}$ is equal to the sum of the multiplicity of  $\lambda$ and  $2\lambda$.
\end{enumerate}
\end{thm}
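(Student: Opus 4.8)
The plan is to prove the three assertions in turn, deducing (1) and (2) from the restricted root space decomposition of $\mathfrak{g}$ and then obtaining (3) by a direct second-fundamental-form computation for a principal orbit inside $S^n\subset\mathfrak{p}$. For (1) I would mimic the proof that a real symmetric matrix is orthogonally diagonalizable: fix a regular $A_0\in\mathfrak{a}$ (so $\lambda(A_0)\neq 0$ for every restricted root $\lambda$); given $X\in\mathfrak{p}$, the function $k\mapsto\langle\mathrm{Ad}(k)X,A_0\rangle$ on the compact group $K$ attains a maximum at some $k_0$, and differentiating along $t\mapsto\exp(tZ)k_0$ together with the $\mathrm{ad}$-invariance of the inner product (the Killing form restricted to $\mathfrak{p}$) gives $\langle\mathrm{Ad}(k_0)X,[Z,A_0]\rangle=0$ for all $Z\in\mathfrak{k}$; since $A_0$ is regular, $[\mathfrak{k},A_0]$ is the whole orthogonal complement of $\mathfrak{a}$ in $\mathfrak{p}$, so $\mathrm{Ad}(k_0)X\in\mathfrak{a}$ and the orbit meets $\mathfrak{a}$.

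For (2) I would use that $T_X(K\cdot X)=[\mathfrak{k},X]$, hence $\dim(K\cdot X)=\dim\mathfrak{k}-\dim\mathfrak{z}_{\mathfrak{k}}(X)$, and by (1) reduce to $X\in\mathfrak{a}$. Writing $\theta$ for the Cartan involution, each $\mathfrak{g}_\lambda\oplus\mathfrak{g}_{-\lambda}$ ($\lambda\in\Delta$) is $\theta$-stable with $\pm1$-eigenspaces $\mathfrak{k}_\lambda,\mathfrak{p}_\lambda$ of the common dimension $m_\lambda$, and one may pick paired bases $\{S^\lambda_i\}\subset\mathfrak{k}_\lambda$, $\{T^\lambda_i\}\subset\mathfrak{p}_\lambda$ with $[H,S^\lambda_i]=\lambda(H)T^\lambda_i$ and $[H,T^\lambda_i]=\lambda(H)S^\lambda_i$ for all $H\in\mathfrak{a}$. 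This yields the orthogonal decompositions $\mathfrak{k}=\mathfrak{z}_{\mathfrak{k}}(\mathfrak{a})\oplus\bigoplus_{\lambda\in\Delta}\mathfrak{k}_\lambda$ and $\mathfrak{p}=\mathfrak{a}\oplus\bigoplus_{\lambda\in\Delta}\mathfrak{p}_\lambda$, so for $X\in\mathfrak{a}$ one reads off $\mathfrak{z}_{\mathfrak{k}}(X)=\mathfrak{z}_{\mathfrak{k}}(\mathfrak{a})\oplus\bigoplus_{\lambda(X)=0}\mathfrak{k}_\lambda$, which is smallest precisely when no root vanishes on $X$, i.e.\ $X$ is not singular; in that case $\dim(K\cdot X)=\sum_{\lambda\in\Delta}m_\lambda=\dim\mathfrak{p}-\dim\mathfrak{a}$. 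Combined with (1) and the fact that two points of $\mathfrak{a}$ are $K$-conjugate iff they are conjugate under the restricted Weyl group, this shows an orbit has maximal dimension iff it meets $\mathfrak{a}$ only in non-singular elements.

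For (3), assume $\dim\mathfrak{a}=2$ and take a regular unit vector $A\in\mathfrak{a}$; then $M:=K\cdot A$ is a compact homogeneous submanifold of $S^n$ of dimension $\dim\mathfrak{p}-2=n-1$, hence a hypersurface. From the decomposition above, $T_AM=[\mathfrak{k},A]=\bigoplus_{\lambda\in\Delta}\mathfrak{p}_\lambda$, so the normal space of $M$ in $\mathfrak{p}$ at $A$ is $\mathfrak{a}=\mathrm{span}\{A,B\}$ and the unit normal of $M$ in $S^n$ at $A$ is $B$. To compute the shape operator $S_B$ I would take $Z=S^\lambda_i$ and the curve $\gamma(t)=\mathrm{Ad}(\exp tZ)A\in M$, for which $\gamma'(0)=[Z,A]=-\lambda(A)T^\lambda_i$ and $\gamma''(0)=[Z,[Z,A]]$; projecting onto $B$ and using $\mathrm{ad}$-invariance, $\langle\gamma''(0),B\rangle=-\langle[Z,A],[Z,B]\rangle=-\lambda(A)\lambda(B)|T^\lambda_i|^2$, while $|\gamma'(0)|^2=\lambda(A)^2|T^\lambda_i|^2$, so $T^\lambda_i$ is a principal direction with principal curvature $-\lambda(B)/\lambda(A)$. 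Letting $i$ run over a basis of $\mathfrak{p}_\lambda$ and noting that $\lambda$ and $2\lambda$ (when $2\lambda$ is also a root) yield the same value $-\lambda(B)/\lambda(A)$, the principal curvature attached to $\lambda\in\Delta^{\ast}$ has multiplicity $m_\lambda+m_{2\lambda}$; since distinct elements of $\Delta^{\ast}$ are never proportional on $\mathfrak{a}$, these exhaust the distinct principal curvatures.

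I expect the main obstacle to be the linear-algebra bookkeeping behind (2) and (3): fixing the $\theta$-eigenspace structure of each $\mathfrak{g}_\lambda\oplus\mathfrak{g}_{-\lambda}$, normalizing the paired vectors $S^\lambda_i\leftrightarrow T^\lambda_i$ so that the bracket relations hold on all of $\mathfrak{a}$, and verifying that $[\mathfrak{k},A]$ really is all of $\bigoplus_{\lambda\in\Delta}\mathfrak{p}_\lambda$ for $A$ regular (which is exactly what makes $M$ a hypersurface rather than a submanifold of higher codimension). One also has to be careful to use one and the same $\mathrm{ad}$-invariant inner product throughout---to normalize $S^n$, to induce the metric on $M$, and in the identity $\langle\gamma''(0),B\rangle=-\langle[Z,A],[Z,B]\rangle$---so that the computed number genuinely is the relevant component of the second fundamental form, up to the fixed sign convention in the shape operator. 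Finally, to connect with the aim of the note, one recalls that in rank two the restricted root system must be $A_1\times A_1$, $A_2$, $B_2=C_2$, $BC_2$ or $G_2$, so that $p=|\Delta^{\ast}|\in\{2,3,4,6\}$, in agreement with M\"{u}nzner's bound on the number of distinct principal curvatures.
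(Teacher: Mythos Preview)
The paper does not actually prove this theorem: it attributes the result to Takagi and Takahashi \cite{TT} and moves directly to the table of rank-two symmetric spaces, so there is no argument in the paper to compare against. Your proposal is therefore supplying a proof where the paper gives none.

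That said, your argument is correct and is essentially the standard one (and, in outline, the one in \cite{TT}). The extremum argument for (1) is the usual proof of conjugacy of maximal abelian subspaces; the dimension count in (2) via $\mathfrak{z}_{\mathfrak{k}}(X)=\mathfrak{z}_{\mathfrak{k}}(\mathfrak{a})\oplus\bigoplus_{\lambda(X)=0}\mathfrak{k}_\lambda$ is exactly right, and the Weyl-group remark cleanly handles the ``only if'' direction; and in (3) the curve $\gamma(t)=\mathrm{Ad}(\exp tZ)A$ with $Z\in\mathfrak{k}_\lambda$ together with $\langle\gamma''(0),B\rangle=-\langle[Z,A],[Z,B]\rangle$ gives the eigenvalue $-\lambda(B)/\lambda(A)$ with the claimed multiplicity $m_\lambda+m_{2\lambda}$. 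One small clarification you may want to add: when you pass from $\langle\gamma''(0),B\rangle/|\gamma'(0)|^2$ to a principal curvature you are implicitly using that $T^\lambda_i$ is an eigenvector of the shape operator, not just that the Rayleigh quotient has that value; this follows because the $\mathfrak{p}_\lambda$ are mutually orthogonal and the same computation with $Z\in\mathfrak{k}_\lambda$, $Z'\in\mathfrak{k}_\mu$ ($\mu\neq\lambda,2\lambda,\tfrac{1}{2}\lambda$) shows the off-diagonal blocks of $S_B$ vanish.
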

Cartan's list of  irreducible symmetric spaces of compact type of rank 2 and preceding theorem lead us to  the following table:
\vspace{2mm}

\begin{tabular}{|c|c|c|c|c|}
	\hline
$\mathfrak{g}$ &  $\mathfrak{h}$    &   $dim M$  & $p$   & $m_{i}$ \\
	\hline\hline
$\mathfrak{su}(3)$&$\mathfrak{so}(3)$ &3 &3& $m_i=1$\\\hline
$\mathfrak{su}(3)+\mathfrak{su}(3)$&$\mathfrak{su}(3)$ &6 & 3 &$m_i=2$\\\hline
$\mathfrak{su}(6)$&$\mathfrak{sp}(3)$ & 12&3 &$m_i=3$\\\hline
$\mathfrak{e_6}$&$\mathfrak{f_4}$&24 &3 &$m_i=3$\\\hline
$\mathfrak{so}(n+2),n\geq 3$&$\mathfrak{so}(n)+\mathfrak{so}(2)$ &2n-2 &4 &$m_1=m_3=1;$ $m_2=m_4=n-2$\\\hline
$\mathfrak{su}(n+2),n\geq 2$&$\mathfrak{su}(n)+\mathfrak{su}(2)$ & 4n-2&4 &$m_1=m_3=2;$ $m_2=m_4=2n-3$ \\\hline
$\mathfrak{sp}(n+2),n\geq 2$&$\mathfrak{sp}(n)+\mathfrak{sp}(2)$ &8n-2 &4 &$m_1=m_3=4;$ $m_2=m_4=4n-5$ \\\hline
$\mathfrak{so}(5)+\mathfrak{so}(5)$&$\mathfrak{so}(5)$ &8 &4 &$m_i=2$\\\hline
$\mathfrak{so}(10)$&$\mathfrak{u}(5)$ &18 &4 &$m_1=m_3=4 $;$m_2=m_4=5 $\\\hline
$\mathfrak{e_6}$&$\mathfrak{so}(10)+\mathbb{R}$ &30 &4 &$m_1=m_3=6$;$m_2=m_4=9$\\\hline
$\mathfrak{g_2}$&$\mathfrak{so}(4)$ &6 & 6&$m_i=1$\\\hline
$\mathfrak{g_2}+\mathfrak{g_2}$&$\mathfrak{g_2}$ &12 & 6 &$m_i=2$\\\hline
\hline
\end{tabular}

\vspace{3mm}
The first 4 rows of table prove  Pawel Nurowski's claims.

A quick look to the this table shows that there are  many isoparametric hypersurfaces with $4$ principal curvatures such that their multiplicities are not equal, this  provides a positive answer to   Cartan's second question cited in section 2.
\vspace{7mm}
\begin{exa}
Only $S^7$ and $S^{13}$ have all possible type of distinct principal curvatures. Here we want to see in  detail all isoparametric hypersurfaces of $S^7$. Let $x=(x_1,...,x_8)$ be the coordinate of $\mathbb{R}^8$ and $f=F\mid S^7$, denote $k$-dimensional sphere with radius $"r"$ by $ S^k(r)$ :
\vspace{3mm}
\begin{enumerate}
\item $\mathbf{p=1:}$  \hspace{3.5mm}$F(x)=x_8 $   ,\hspace{33mm} $f^{(-1)}(t)=S^6(\sqrt{1-t})$ ,\hspace{4mm} $ t\in(-1,1)$.

\vspace{3mm}
\item$\mathbf{p=2:}$\hspace{4mm}  $F(x)=\sum^{k}_{i=1}x^2_i-\sum^{8}_{j=k+1}x^2_j$ ,\hspace{3mm} $f^{(-1)}(t)=S^k\left(\sqrt{\frac{1-t}{2}}\right )\times S^{6-k}\left(\sqrt{\frac{1+t}{2}}\right)$.  
\vspace{3mm}
\item $\mathbf{p=3:}$  \bdm F(x)=u^3-3uv^2
+\frac{3}{2}u^2(|x|^2+|y|^2-2|z|^2)+\frac{3\sqrt{3}}{2}v(|x|^2-|y|^2)+\frac{3\sqrt{3}}{2}(xyz+\overline{x}\overline{y}\overline{z})\
\edm
where, $x=(u,v,x,y,z)\in\mathbb{R}^2\times\mathbb{C}^3=\mathbb{R}^8$,

and $f^{-1}(t)\cong\mathrm{SU}(3)/\textbf{T}^2$=isotropy orbit of $\mathrm{SU}(3)\mathrm{SU}(3)/\mathrm{SU}(3)$. We may emphasize that this case is already our "good" case and we knew this from section $2$, and polynomial $F(x)$ corresponds to the second row in the table, it is expression of tensor $\Upsilon$ in  $\mathrm{SU}(3)\mathrm{SU}(3)/\mathrm{SU}(3)$ of dimension $8$. This polynomial of degree $3$ can be modified  by considering  right order of (x,y,z) in $\mathbb{H}$ and $\mathbb{O}$ in order to express the tensor $\Upsilon$ by polynomial of degree $3$ in $SU(6)/Sp(3)$ and $E_6/F_4$ respectively.
\vspace{4mm}
\item$\mathbf{p=4:}$ \hspace{3mm} $F(x)$ is a homogeneous polynomial of degree 4, and $f^{-1}(t)$=isotropy orbit of $\mathrm{SO}(6)/\mathrm{SO}(2)\mathrm{SO}(4)$.

\vspace{3mm}
\item $\mathbf{p=6:}$  \hspace{5mm}$F(x)$ is a  homogeneous polynomial of degree 6, and $f^{-1}(t)\cong\mathrm{SO}(4)/\mathbb{Z}_2$=isotropy orbit of $\mathrm{G}_2/\mathrm{SO}(4)$.
\end{enumerate}
\end{exa}   

Proof of the theorem 3.1 shows that the normal great circles to the family of isoparametric hypersurface are the intersection of Cartan subalgebra with $S^n$. The focal points on a normal great circles are the singular elements in the corresponding Cartan subalgebra that lies in $S^n$, and the number $p$ of distinct principal curvatures relates to the Weyl group of the symmetric space, since the angle between the rays in the boundary of Weyl chamber is $\pi/p$. Order of the Weyl group is $2p$. Hence the principal curvatures and their multiplicities can be calculated from the roots of the symmetric space.

To give a concrete example take the oriented Grassmanian of two planes $\textbf{G}_2(\mathbb{R}^{n+3})$. Its rank is equal to two and the order of the Weyl group is eight. We therefore get an isoparametric family with $p=4$ in $S^{2n+1}$. Its multiplicities can be seen to be $1$ and $n-1$.

There is a short note of Nomizu[4] from the year 1973 in which he expressed the above example in terms of an isoparametric function on $S^{2n+1}$:
\bdm
F(x,y)=(|x|^2-|y|^2)^2+4\langle x,y\rangle ,
\edm

where  $(x,y)\in\mathbb{R}^{n+1}\times\mathbb{R}^{n+1}=\mathbb{R}^{2n+2}$.

\begin{exa}
Let $N^5={\mathrm{SU}(3)}/{\mathrm{SO}(3)}$, then $\mathfrak{su}(3)=\mathfrak{so}(3)\oplus\mathfrak{p}$, where $\mathfrak{p}$ is a vector subspace of $\mathfrak{su}(3)$, more precisely $\mathfrak{p}=\{X\in\mathfrak{su}(3)|X^t=X$ which can be identified canonically by $T_x{N^5}$, hence it is $5$-dimensional subspace. Consider the orbit 
\bdm
M:=\{kxk^{-1}| k\in\mathrm{SO}(3), \hspace{2mm}x\in\mathfrak{p}\}.
\edm
For $X\in\mathfrak{so}(3)$ we have $[X,x]=0$ iff $X=0$. Hence the map
\bdm
\phi:\mathrm{SO}(3)\rightarrow M
\edm
given by  $k\rightarrow Ad(k)x$ is local diffeomorphism, moreover $\mathrm{SO}(3)$ is compact therefore $\phi$ is really covering map. $\mathrm{SO}(3)$ is three dimensionalmanifold hence dim$M=3$, and $M$ is a homogeneous and therefore it is isoparametric hypersurface of $S^4$.
The tangent space of $M$ at $x$ is
\bdm
T_xM=[\mathfrak{so}(3),x].
\edm
The geodesic in $M$ with initial vector $[X,x]$ at $x$ is given by $\exp{(tX)}x\exp{(-tX)}$. Therefore the shape operator at $x$ with respect to a unit normal vector $\xi$ is given by
\bdm
A[X,x]=-[X,\xi].
\edm
Using
\bdm
\xi:=\frac{1}{6}\left(\begin{array}{ccc}
i&0&0\\
0&i&0\\
0&0&-2i\\
\end{array}\right),
\edm
 computation shows that the principal curvatures are $+\frac{1}{\sqrt{3}},-\frac{1}{\sqrt{3}} $ and $0$. Hence $M$ has $p=3$ distinct principal curvatures each of multiplicity $1$.
\end{exa}
\section{Inhomogeneous isoparametric Hypersurfaces}\noindent
A systematic approach to find inhomogeneous isoparametric hypersurface was given by Ferus, Karcher and M\"{u}nzner in \cite{FKM} who associated to a representation of Clifford algebra $\mathbf{C}_{m-1}$ on $\mathbb{R}^l$ a Cartan-M\"{u}nzner polynomial that gives rise to an isoparametric hypersurface with $g=4$ principal curvatures in $S^{2l-1}$.

\vspace{2mm}
\begin{dfn}
For each integer $m\geq 0$, the Clifford algebra $C_m$ is the associative algebra over R, that is generated by a unity 1 and the element $e_1,...,e_m$ subjected only to the relations
\bdm
e_i^2=1  , \hspace{10mm} e_ie_j=e_je_i,
\edm  
where  $ i\neq j$  , and   $1\leq i\neq j\leq m$.
\end{dfn}

A representation of $C_{m-1}$ on $\mathbb{R}^l$ corresponds to a set of skew-symmetric matrices $E_1,...,E_{m-1}$ in the orthogonal group $\mathrm{O}(l)$ such that
\bdm
E_iE_j+E_jE_i=-2\delta_{ij}Id.
\edm

\begin{dfn}
An $(n+1)$-tuple $(P_0,...,P_m)$ of symmetric endomorphism of $\mathbb{R}^{2l}$ is called a Clifford system if
\bdm
P_iP_j+P_jP_i=2\delta{ij}Id.
\edm
\end{dfn}
We define $(P_0,...,P_m)$ of symmetric endomorphism on $\mathbb{R}^{2l}$ by
\bdm
P_0=(x,y) ,\hspace{5mm} P_1=(y,x) ,\hspace{5mm} P_{i+1}=(E_iy,-E_ix)  ;\hspace{10mm}  x,y\in\mathbb{R}^{l}.
\edm

\begin{dfn}
A representation of Clifford system is irreducible iff representation of $C_{m-1}$ is irreducible. 
\end{dfn}

Therefore by Atiyah, Bott and Shapiro [3] Clifford algebra $C_{m-1}$ has an irreducible representation of degree $l$ if and only if $l=\delta(m)$ as in the table:

\vspace{5mm}

\bdm
\begin{tabular}{ccc}
	\hline
 $ m $ & $\mathbf{C}_{m-1} $  & $\delta({m})$ \\
	\hline
1&$\mathbb{R}$ &1\\
2&$\mathbb{C}$& 2\\
3&$\mathbb{H}$& 4\\
4&$\mathbb{H\oplus H}$&4\\
5&$\mathbb{H}(2)$&8\\
6&$\mathbb{C}(4)$&8\\
7&$\mathbb{R}(8)$&8\\
8&$\mathbb{R}(8)\oplus\mathbb{R}(8)$ &8\\
k+8&$\mathbb{C}_{k-1}(16)$&$16\delta(k)$\\
\end{tabular}
\edm

\vspace{6mm}

Reducible representations of $C_{m-1}$  on $\mathbb{R}^l$ can be obtained by taking a direct sum of $k$ irreducible representation of $C_{m-1}$ on $\mathbb{R}^{\delta(m)}$ for $l=k\delta(m)$ , $k>1$.

\begin{thm}
Given a Clifford system $(P_0,...,P_m)$ on $\mathbb{R}^{2l}$ such that $m_1:=m$ and $m_2$:=l-m-1 are positive, then $F:\mathbb{R}^{2l}\ra\mathbb{R}$
\bdm
F(x):=\langle x,x\rangle^2-2\sum^m_{i=1}\langle P_ix,x\rangle^2
\edm
is an isoparametric function defining an isoparametric family with $p=4$, and multiplicities $(m_1,m_2)$.
\end{thm}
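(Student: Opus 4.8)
The plan is to verify that $F$ satisfies the Cartan--M\"unzner differential equations \eqref{eq:C-M} with $p=4$ and then invoke M\"unzner's converse quoted above: once these equations hold, the level sets of $F|_{S^{2l-1}}$ automatically form an isoparametric family whose defining polynomial has degree $p$, with the two focal submanifolds $F^{-1}(\pm1)\cap S^{2l-1}$, and the multiplicities $m_1,m_2$ are tied to the coefficients by $c = p^2(m_2-m_1)/2 = 8(m_2-m_1)$. Since $F$ is a homogeneous polynomial of degree $4$, everything reduces to the two identities
\[
|\mathrm{grad}\,F|^2 = 16\,r^{6}, \qquad \Delta F = c\,r^{2}, \qquad r = |x|,\ x\in\mathbb{R}^{2l},
\]
with $c$ a constant to be read off from the computation.

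\emph{Gradient.} Write $u := \langle x,x\rangle$ and $f_i := \langle P_i x,x\rangle$, so that $F = u^2 - 2\sum_i f_i^2$, the sum running over the members of the Clifford system. Since each $P_i$ is symmetric, $\mathrm{grad}\,u = 2x$ and $\mathrm{grad}\,f_i = 2P_i x$, whence $\mathrm{grad}\,F = 4ux - 8\sum_i f_i P_i x$. Squaring, one meets the inner products $\langle x, P_i x\rangle = f_i$, $\langle P_i x, P_i x\rangle = \langle x, P_i^2 x\rangle = u$ (using $P_i^2 = \mathrm{Id}$), and, for $i \neq j$, $\langle P_i x, P_j x\rangle = \langle x, P_i P_j x\rangle = 0$, because the Clifford relation $P_iP_j + P_jP_i = 0$ makes $P_iP_j$ skew-symmetric. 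Substituting, the contribution of $\sum f_i^2$ cancels the mixed terms and one is left with $|\mathrm{grad}\,F|^2 = 16 u^3 = 16 r^6$.

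\emph{Laplacian and multiplicities.} From $\Delta u = 4l$ and $|\mathrm{grad}\,u|^2 = 4u$ one gets $\Delta(u^2) = 2u\,\Delta u + 2|\mathrm{grad}\,u|^2 = 8(l+1)u$; and $\Delta(f_i^2) = 2f_i\,\Delta f_i + 2|\mathrm{grad}\,f_i|^2$, where $\Delta f_i = 2\,\mathrm{tr}\,P_i = 0$ (each $P_i$ is conjugate to $-P_i$ via any $P_j$ with $j\neq i$, which exists since $m = m_1 \geq 1$) and $|\mathrm{grad}\,f_i|^2 = 4\langle x, P_i^2 x\rangle = 4u$, so $\Delta(f_i^2) = 8u$. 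Summing over all $m+1$ matrices of the system, $\Delta F = 8(l+1)u - 16(m+1)u = 8(l-2m-1)\,r^2$, so the M\"unzner constant is $c = 8(l-2m-1)$ and hence $m_2 - m_1 = l-2m-1$. To pin the multiplicities down individually I would add the dimension relation: for $p=4$ the principal curvatures have multiplicities $m_1,m_2,m_1,m_2$, so $\dim M = 2(m_1+m_2) = 2l-2$, i.e.\ $m_1+m_2 = l-1$; solving the two linear equations yields $m_1 = m$, $m_2 = l-m-1$. The hypotheses $m_1, m_2 > 0$ are needed precisely to ensure that the degree-$4$ part of $F$ does not degenerate and that $F$ is not a power of a Cartan--M\"unzner polynomial of lower degree, so that $p$ really equals $4$.

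The step demanding the most care is the pair of differential-operator calculations, and within them the two cancellations that make them work: $\langle P_i x, P_j x\rangle = 0$ for $i \neq j$, which removes the obstruction to $|\mathrm{grad}\,F|^2$ depending on $r$ alone, and $\mathrm{tr}\,P_i = 0$, which does the same for $\Delta F$. Both are immediate consequences of the Clifford relations and the symmetry of the $P_i$; once they are in place the rest is bookkeeping, and the identification of $p$ and of $(m_1,m_2)$ follows from M\"unzner's theory together with the elementary dimension count.
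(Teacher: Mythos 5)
Your proof is correct and takes essentially the same route as the paper: verify the Cartan--M\"unzner equations $|\mathrm{grad}\,F|^2=16r^6$ and $\Delta F=8(m_2-m_1)r^2$ via $\langle P_ix,P_jx\rangle=\delta_{ij}\langle x,x\rangle$ (skew-symmetry of $P_iP_j$ for $i\neq j$) and $\mathrm{tr}\,P_i=0$, then appeal to M\"unzner's converse, and you supply details (the tracelessness argument, the dimension count $m_1+m_2=l-1$ pinning down the individual multiplicities) that the paper leaves implicit. Note that, exactly as in the paper's own computation, your Laplacian is taken with the sum over all $m+1$ operators $P_0,\ldots,P_m$, which is the correct FKM polynomial for the stated multiplicities even though the theorem as printed starts the sum at $i=1$.
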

\begin{proof}
We need to check that this polynomial satisfies in Cartan-M\"{u}nzner differential equations (\ref{eq:C-M}):
\bdm
grad\left(\mathbf{F}(x)\right)= 4\langle x,x\rangle x-8\sum_i\langle P_ix,x\rangle P_ix ,
\edm
hence
\begin{eqnarray*}
|grad(\mathbf{F}(x))|^2 &=& 16\langle x,x\rangle^3+64\sum_{i,j}\langle 
P_ix,x\rangle\langle P_ix,P_jx\rangle-64\sum_{i}\langle P_ix,x\rangle^2\langle x,x\rangle=16\langle x,x\rangle^3, 
\end{eqnarray*}
because $\langle P_ix,P_jx\rangle\langle P_jP_ix,x\rangle=\langle x,x\rangle\delta_{ij}$.
Note that $P_jP_i$ is skew-symmetric for $i\neq j$. Furthermore,
\bdm
\Delta\textbf{F}(x)=4(2l+2)\langle x,x\rangle-2\sum_i\left(2<\langle P_ix,x\rangle\Delta(\langle P_x,x\rangle)+2|{grad}\langle P_ix,x\rangle|^2\right)=8(m_2-m_1)\langle x,x\rangle,
\edm
and this completes the proof.
\end{proof}

Due to Ferus, Karcher and M\"{u}nzner, isoparametric hypersurfaces resulting from this theorem are called isoparametric hypersurfaces of FKM-type .

We present a list of the low-dimensional Clifford examples with $(m_1,m_2)=(m,k\delta(m)-m-1)$.
In the table below, the case where $m\leq 0$ are denoted by a dash.

\vspace{5mm}
$$
\begin{tabular}{|c|cccccccccc}
	\hline
$ \delta (m)$ &$1$& $2$ & $4$ &$4$&$8$ &$8$ &$8$ &$8$ & $16$ &...\\
	\hline
k&&&&&&&&&&\\
1&-&-&-&-&(5,2)&(6,1)&-&-&(9,6)&...\\
2&-&(2,1)&(3,4)&(4,3)&(5,10)&(6,9)&(7,8)&(8,7)&(9,22)&...\\
3&(1,1)&(2,3)&(3,8)&(4,7)&(5,18)&(6,17)&(7,16)&(8,15)&(9,38)&...\\
4&(1,2)&(2,5)&(3,12)&(4,11)&(5,26)&(6,25)&(7,24)&(8,23)&(9,54)&...\\
5&(1,3)&(2,7)&(3,16)&(4,17)&(5,34)&(6,33)&(7,32)&(8,31)&(.)&...\\
.&.&.&.&.&.&.&.&.&.&....\\
.&.&.&.&.&.&.&.&.&.&...\\
.&.&.&.&.&.&.&.&.&.&...\\

\end{tabular}
$$

\vspace{5mm}

This table not only contains all homogeneous isoparametric hypersurfaces from section 3 with exception of two homogeneous families, having multiplicities $(2,2)$ and $(4,5)$ but also many inhomogeneous isoparametric hypersurfaces, because their multiplicities do not agree with the multiplicities of any homogeneous isoparametric hypersurface from the table of previous section. 
However Ferus, Karcher and M\"{u}nzner also gave a geometric proof of the inhomogeneity of many of their examples,  more precisely  they proved following theorem: 

\begin{thm}\cite{FKM}.
Suppose $3\leq 3m_1\leq m_2+9$, and in case m=4, $P_0,P_1,P_2,P_3\neq\pm Id$. 
Then the isoparametric family is inhomogeneous.
\end{thm}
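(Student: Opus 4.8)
The plan is to prove that the focal submanifold $M_{+}=(F|_{S^{2l-1}})^{-1}(1)$ of the FKM family is \emph{not} a homogeneous submanifold of the sphere; this suffices, because if the isoparametric family were homogeneous it would be an orbit of a compact group $G\subset\mathrm{SO}(2l)$, and then, by M\"unzner's description of the two disk bundles, $M_{+}$ would be the $G$-orbit at the core of $D_{1}$, hence homogeneous.

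First I would record the extrinsic geometry of $M_{+}$. Restricting $F$ to $S^{2l-1}$ gives $M_{+}=\{x\in S^{2l-1}:\langle P_{i}x,x\rangle=0,\ i=0,\dots,m\}$, a submanifold of dimension $m_{1}+2m_{2}$ whose normal space in $\mathbb{R}^{2l}$ at $x$ is the orthonormal span of $x,P_{0}x,\dots,P_{m}x$. Writing $P(a)=\sum_{i}a_{i}P_{i}$ for $a$ in the unit sphere $S^{m}\subset\mathbb{R}^{m+1}$, one checks that $P(a)^{2}=\mathrm{Id}$, that the unit normals of $M_{+}$ inside $S^{2l-1}$ are exactly the vectors $P(a)x$, and — via the algebraic identity $P(a)P_{i}P(a)=2a_{i}P(a)-P_{i}$ — that $P(a)M_{+}=M_{+}$, so $M_{+}$ carries a distinguished family of embedded $m$-spheres $\Sigma_{y}=\{P(a)y:a\in S^{m}\}$. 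A computation of the shape operator from the Clifford relations and the Codazzi equation then shows that $A_{P(a)x}$ has eigenvalues $0,+1,-1$ with multiplicities $m_{1},m_{2},m_{2}$, \emph{independently} of $x$ and $a$; thus $M_{+}$ has constant principal curvatures and isospectral shape operators, and its homogeneity cannot be contradicted at this crude level.

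The substantive step is to isolate a finer invariant that a homogeneous submanifold must carry uniformly. Following Ferus--Karcher--M\"unzner, I would attach to each point $x$ the full second fundamental form $\mathrm{II}_{x}\in\Sym^{2}(T_{x}M_{+})^{*}\otimes\nu_{x}$ together with the normal curvature, and study the stabiliser $\mathrm{Aut}(\mathrm{II}_{x})\subset\mathrm{O}(T_{x}M_{+})\times\mathrm{O}(\nu_{x})$ of this structure — equivalently, how the eigenbundles $E_{0}(a),E_{\pm}(a)$ of $A_{P(a)x}$ rotate as $a$ runs over $S^{m}$. If $M_{+}=G/H$ homogeneously with $G\subset\mathrm{SO}(2l)$, then the linear isotropy embeds $H$ into $\mathrm{Aut}(\mathrm{II}_{x})$, so $\dim G=(m_{1}+2m_{2})+\dim H\le (m_{1}+2m_{2})+\dim\mathrm{Aut}(\mathrm{II}_{x})$; on the other hand $G$ must preserve the Clifford system $(P_{0},\dots,P_{m})$ up to its $\mathrm{O}(m+1)$ worth of admissible reparametrisations, which bounds $\dim G$ from above in terms of the centralizer of the Clifford-algebra representation on $\mathbb{R}^{2l}$ and $\binom{m+1}{2}$. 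Playing these two bounds against the requirement of transitivity, and using the Hsiang--Lawson list to restrict which $G$ can occur at all as a cohomogeneity-two action, yields a numerical inequality between $m_{1}$ and $m_{2}$ that the hypothesis $3m_{1}\le m_{2}+9$ is designed to violate; hence no transitive $G$ exists and $M_{+}$ is inhomogeneous. The separate hypothesis imposed when $m=4$ is there because $C_{3}=\mathbb{H}\oplus\mathbb{H}$ is not simple: for the degenerate reducible Clifford systems the symmetry group is larger and the resulting family is actually one of the homogeneous examples, so that case must be excluded.

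The hard part is the middle computation: pinning down $\mathrm{Aut}(\mathrm{II}_{x})$ precisely — that is, controlling how the shape-operator eigenbundles turn over $S^{m}$ — and then carrying out the case analysis of which subgroups of $\mathrm{SO}(2l)$ compatible with the Clifford structure can act transitively on a manifold of dimension $m_{1}+2m_{2}$. This is the only step that goes beyond formal manipulation; it is where the representation theory of $C_{m-1}$ (periodic mod $8$), the bookkeeping of the multiplicities $(m_{1},m_{2})$, and the exceptional low-dimensional coincidences all come in, and it is where the precise numerical cutoff is forced.
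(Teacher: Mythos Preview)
The paper does not give its own proof of this theorem; it simply quotes the result from \cite{FKM}.  So the relevant comparison is with the original Ferus--Karcher--M\"unzner argument.

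Your opening is right and matches FKM: reducing to the extrinsic inhomogeneity of the focal manifold $M_{+}$, identifying its unit normals as the $P(a)x$, and computing that every shape operator $A_{P(a)x}$ has spectrum $\{0,+1,-1\}$ with multiplicities $m_{1},m_{2},m_{2}$ independent of $x$ and $a$.  That is exactly the point at which the eigenvalue data alone cannot detect inhomogeneity, so one has to pass to the full second fundamental form.

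Where you diverge from FKM, and where I think there is a genuine gap, is in the strategy you then adopt.  You propose a dimension count: one upper bound $\dim G\le \dim M_{+}+\dim\mathrm{Aut}(\mathrm{II}_{x})$ from the isotropy, a second upper bound from the assertion that $G$ must preserve the Clifford system up to $\mathrm{O}(m+1)$, and then you say that ``playing these two bounds against the requirement of transitivity'' forces the numerical inequality.  But two upper bounds on $\dim G$ cannot contradict one another, and the first is automatically at least $\dim M_{+}$, so it contributes nothing toward ruling out transitivity.  The second bound rests on the claim that every ambient isometry of the family preserves the given Clifford system; this is not obvious and in fact FKM themselves exhibit isoparametric families arising from \emph{inequivalent} Clifford systems, so the symmetry group need not respect any one of them.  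Invoking the Hsiang--Lawson list here is also close to circular.  Most importantly, the cut-off $3m_{1}\le m_{2}+9$ never actually emerges from anything you write; it is asserted to be ``what the hypothesis is designed to violate'' without a computation producing it.

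FKM's actual argument is different and more direct: no group $G$ is ever analysed.  Having noted that the spectrum of $A_{P(a)x}$ is constant, they construct a \emph{pointwise} scalar invariant of the full second fundamental form $\mathrm{II}_{x}$ --- a quantity depending only on the $\mathrm{O}(T_{x}M_{+})\times\mathrm{O}(\nu_{x})$-equivalence class of $\mathrm{II}_{x}$ --- and then exhibit, under the hypothesis $3m_{1}\le m_{2}+9$ (with the extra genericity condition on the product $P_{0}\cdots P_{4}$ when $m=4$), explicit points of $M_{+}$ at which this invariant takes different values.  Since any isometry of $S^{2l-1}$ carrying $M_{+}$ to itself must preserve such an invariant, transitivity is impossible.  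The numerical threshold is precisely what is needed for the invariant to be non-constant; the side condition at $m=4$ removes the degenerate $\mathbb{H}\oplus\mathbb{H}$ case in which the invariant happens to be constant and the family is in fact homogeneous.
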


This theorem says that there are many isoparametric hypersurfaces which do not admit isometry group such that act  transitively on the family of isoparametric hypersyrface. Hence this gives negative answer to the third question of Cartan. 

\textbf{Open Questions: }
\begin{enumerate}
\item All known examples of isoparametric hypersurfaces with four distinct principal curvatures are either FKM-type or two exceptional homogeneous families, having multiplicities $(2,2)$ and $(4,5)$. But one still does not know whether further examples exist.
\vspace{3mm}
\item If $p=6$, then Abresch \cite{Ab} proved that $m_1=m_2$ and there are only the possibilities $m_1=m_2=1$ , $m_1=m_2=2$ and we saw in previous section that there are homogeneous example in both cases. In the case of multiplicity m=1, Dorfmeister and Neher \cite{DN} showed that an isoparametric hypersurface must be homogeneous, thereby completely classifying that case. The proof of Dorfmeister and Neher is quite algebraic in nature.
The classification of isoparametric hypersurfaces with six principal curvatures of multiplicities $m=2$ is part of problem 34 on Yau's \cite{Yau} list of important open problems in geometry, and remains an open problem.

\end{enumerate}

    
\end{document}